\documentclass[10pt]{amsart}
\usepackage{geometry} 
\geometry{a4paper} 

\footskip 25pt

\newtheorem{theorem}{Theorem}[section]

\newtheorem{definition}{Definition}[section]
 
\newtheorem{proposition}[theorem]{Proposition}
\makeatletter
\numberwithin{equation}{section}
\makeatother

\def\R{\mathbf{R}}
\def\T{\mathbf{T}}

\def\A{\mathcal A}
\def\C{\mathcal C}
\def\C{\mathcal C}

\def\im{\hbox{im}}

\begin{document}
\title{A different look at controllability} 
\author{Pablo Pedregal}
\address{ETSI Industriales, Universidad de Castilla-La Mancha, 13071 Ciudad Real, Spain}
\email{pablo.pedregal@uclm.es}
\subjclass[2000]{}
\thanks{
E.T.S. Ingenieros Industriales. Universidad de Castilla La Mancha.
Campus de Ciudad Real (Spain). Research supported by
MTM2010-19739 of the MCyT (Spain).
 e-mail:{\tt pablo.pedregal@uclm.es
}}

\maketitle
\begin{abstract}
We explore further controllability problems through a standard least square approach as in \cite{pedregal}. By setting up a suitable error functional $E$, and putting $m(\ge0)$ for the infimum, we interpret approximate controllability by asking $m=0$, while exact controllability corresponds, in addition, to demanding that $m$ is attained. We also provide a condition, formulated entirely in terms of the error $E$, which turns out to be equivalent to the unique continuation property, and to approximate controllability.  Though we restrict attention here to the 1D, homogeneous heat equation to explain the main ideas, they can be extended in a similar way to many other scenarios some of which have already been explored numerically, due to the flexibility of the procedure for the numerical approximation. 
\end{abstract}

\section{Introduction}
We would like to explore controllability problems through a least square approximation strategy. As a matter of fact, it has already been considered and proposed in \cite{pedregal}. 

As in a typical least-square approximation, we will set up a non-negative error functional $E$, defined in a suitable space, and let $m=\inf E\ge0$. Approximate controllability is then defined by demanding $m=0$, while exact controllability occurs when, in addition, $m$ is a minimum. The main benefit we have found concerning this viewpoint is that the controllability problem is translated, in an equivalent way, into minimizing such an error functional. As such, from the numerical point of view, one can then proceed to produce numerical approximation by utilizing typical descent strategies. Especially in linear situations, such an error functional is convex and quadratic (though not necessarily coercive), and this numerical procedure should work fine. This is indeed so. 
In addition, it is also true that  $m=0$ is equivalent to the interesting property $E'=0$ implies $E=0$, so that the only possible critical value for $E$ is zero (\cite{pedregal}). 

We only treat explicitly these ideas for the homogeneous heat equation in (spatial) dimension $N=1$, as  it will be pretty clear how to extend this philosophy to many other situations. Specifically, we take $\Omega=(0,1)\subset\R$, and $T>0$, the time horizon. 
\begin{quotation}
For initial and final data $u_0(x)$, and $u_T(x)$, respectively, we would like to find the right-point condition $f(t)$, so that the solution of the heat problem
$$
u_t-u_{xx}=0\hbox{ in }(0, T)\times(0, 1),\quad u(t, 0)=0, u(0, x)=u_0(x), u(t, 1)=f(t),
$$
will comply with $u(T, x)=u_T(x)$. 
\end{quotation}
This is the boundary controllability situation. There is also an inner controllability case in which we take a fixed subdomain $\omega\subset\Omega$.
\begin{quotation}
Determine the source term $f(t, x)$ supported in $\omega$ in such a way that the solution of the problem
$$
u_t-u_{xx}=f\chi_\omega\hbox{ in }(0, T)\times(0, 1),\quad u(t, 0)=0, u(0, x)=u_0(x), u(t, 1)=0,
$$
will comply with $u(T, x)=u_T(x)$. 
\end{quotation}

We refer to \cite{FR}, and to \cite{Russell} for classic results on controllability, and to \cite{FurIma}, \cite{LebRob} for a more recent analysis. \cite{carthel} also contains important ideas through duality arguments in the context of the Hilbert Uniqueness Method. See also \cite{Coron}, \cite{glowinski08}. The numerical analysis of this kind of controllability problems has attracted a lot of work. Without pretending to be exhaustive, we would mention important contributions covering a whole range of methods and approaches in \cite{belgacem}, \cite{carthel}, \cite{EFC-AM-I}, \cite{kindermann}, \cite{micu-zuazua}, \cite{AM-EZ}.

\section{Approximate controllability}\label{prime}
To keep the formalism to a minimum without compromising rigor, let us stick to the situation described in the Introduction
by taking
$Q_T=(0, T)\times(0, 1)$, $\overline u\in H^1(Q_T)$ furnishing the data for $t=0$ and $t=T$, $\overline u(0, x)=u_0(x)$ and $\overline u(T, x)=u_T(x)$, respectively, and assuming that $\overline u(t, 0)=0$ in the sense of traces. Let 
\begin{gather}
\A_0=\{U\in H^1(Q_T): U(t, 0)=U(0, x)=U(T, x)=0 \hbox{ for }x\in(0, 1), t\in(0, T)\},\nonumber\\
 \A=\overline u+\A_0.\nonumber
\end{gather}
For $u\in\A$, define its ``corrector" $v\in H^1_0(Q_T)$ 
to be the unique solution of the variational equality
\begin{equation}\label{calor}
\int_{Q_T}[(u_t+v)\phi+(u_x+v_x)\phi_x+v_t\phi_t]\,dx\,dt=0
\end{equation}
for all $\phi\in H^1_0(Q_T)$. 
Note that $v$ is the unique solution of the minimization problem
$$
\hbox{Minimize in }w\in H^1_0(Q_T):\quad \int_{Q_T}\left(\frac12[(u_x+w_x)^2+w_t^2+w^2]+u_tw\right)\,dx\,dt,
$$
whose equilibrium equation is
\begin{equation}\label{eq}
-(u_x+v_x)_x-v_{tt}+v+u_t=0\hbox{ in }Q_T.
\end{equation}
The weak formulation of (\ref{eq}) is precisely (\ref{calor}). 
The error functional $E_T:\A\to\R^+$ is taken to be the size of the corrector
$$
E_T(u)=\int_{Q_T}\frac12 (v_x^2+v_t^2+v^2)\,dx\,dt.
$$
It is obvious to realize that if $E(u)=0$ because $u$ has a vanishing corrector, then $u$ is a solution of the heat equation (\ref{eq}) with $v\equiv0$, complying with boundary, initial, and final conditions provided by $\overline u$, i.e. the trace of $u$ over $x=1$ is the boundary control sought. Let $\T:\A\mapsto H^1_0(Q_T)$ 
be the linear, continuous operator taking $u$ into its corrector $v$. Our setting is definitely a least-square approach in the spirit of \cite{bochevgunz}, \cite{glowinski83}. 

\begin{definition}\label{defprin}
\begin{enumerate}
\item We say that the datum $\overline u(T, x)$ is approximately controllable from $\overline u(0, x)$ through the subset $\{1\}$ of the boundary of $(0, 1)$, if for every $\epsilon>0$, there is $u_\epsilon\in\A$ such that $E(u_\epsilon)<\epsilon$. 
\item We say that the datum $\overline u(T, x)$ is exactly controllable from $\overline u(0, x)$ through the subset $\{1\}$ of the boundary of $(0, 1)$, if there is $u\in\A$ such that $E(u)=0$. 
\item The unique continuation property holds, given our framework, if the only function $v\in\im\T\subset H^1_0(Q_T)$ 
with
\begin{equation}\label{unique}
\int_{Q_T}(U_tv+U_xv_x)\,dx\,dt=0
\end{equation}
for all $U\in\A_0$ is the trivial one $v\equiv0$. 
\end{enumerate}
\end{definition}
Within the framework just introduced through this definition, we can now state our main result. 
\begin{theorem}
Let $T>0$, and $\overline u\in H^1(Q_T)$ be given. The following assertions are equivalent:
\begin{enumerate}
\item the trace $\overline u(T, x)$ for $t=T$ of $\overline u$ is approximately controllable from $\overline u(0, x)$ through the end-point $\{1\}$;
\item the unique continuation property holds;
\item $E'_T(u)=0$ implies $E_T(u)=0$ for $u\in\A$.
\end{enumerate}
\end{theorem}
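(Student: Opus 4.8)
The plan is to reduce all three assertions to the elementary variational structure of $E_T$. Viewed through the corrector $v=\T u$, the functional $E_T(u)=\frac12\int_{Q_T}(v_x^2+v_t^2+v^2)\,dx\,dt$ is simply the squared $H^1_0(Q_T)$–norm of $v$; and since $u\mapsto v$ is affine on $\A$ (linear in the direction space $\A_0$, because the corrector equation (\ref{calor}) depends linearly on $u_t,u_x$), the map $u\mapsto E_T(u)$ is convex and quadratic. The whole argument will hinge on computing the first variation of $E_T$ and reading the three conditions off it, so the first thing I would do is record this convexity, which already guarantees that a vanishing first variation is the same as global minimality.

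The key computation is the first variation. Perturbing $u$ by $sU$ with $U\in\A_0$ replaces $v$ by $v+s\,\T U$, and differentiating the squared norm gives $E_T'(u)[U]=\int_{Q_T}(vw+v_xw_x+v_tw_t)\,dx\,dt$ with $w=\T U$. Now I would test the corrector equation (\ref{calor}) written for $U$ against the admissible choice $\phi=v\in H^1_0(Q_T)$; this collapses the previous expression to
\[
E_T'(u)[U]=-\int_{Q_T}(U_t v+U_x v_x)\,dx\,dt .
\]
Consequently $E_T'(u)=0$ is \emph{exactly} the assertion that $v=\T u$ obeys (\ref{unique}), while $E_T(u)=0$ is \emph{exactly} $v=0$. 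Since $v$ ranges over $\im\T$ as $u$ ranges over $\A$, this one identity shows that assertion (3) and the unique continuation property (2) are the very same statement, namely that the only $v\in\im\T$ satisfying (\ref{unique}) is $v\equiv0$; so the equivalence (2)$\Leftrightarrow$(3) requires nothing beyond this calculation.

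It remains to tie these to approximate controllability (1). The implication (1)$\Rightarrow$(3) is immediate from convexity: if the infimum $m$ of $E_T$ is $0$, then any $u$ with $E_T'(u)=0$ is a global minimizer, whence $E_T(u)=m=0$. For the converse I would argue by contraposition. If (1) fails then $m>0$, i.e. $0$ lies outside the closure of the affine set $\T(\A)$; projecting $0$ onto this closed convex set yields a nonzero element $v^{*}$ with $\int_{Q_T}(v^{*}w+v^{*}_xw_x+v^{*}_tw_t)\,dx\,dt=0$ for all $w\in\T(\A_0)$, which by the same test as above is precisely condition (\ref{unique}). Producing such a nonzero $v^{*}$ is what contradicts unique continuation and closes the cycle.

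The step I expect to be the real obstacle is exactly this last passage. The projected state $v^{*}$ lives a priori only in $\overline{\im\T}$, not in $\im\T$ itself, and there is no reason for $\T$ to have closed range; this non-closedness is the analytic fingerprint of approximate-but-not-exact controllability for the heat equation, so the minimum defining $m$ need not be attained and the duality argument does not terminate on abstract grounds alone. Reconciling $\overline{\im\T}$ with the property (2) stated on $\im\T$ is therefore the crux, and it is the one place where genuine PDE structure must enter. Here I would unwind the meaning of (\ref{unique}): integrating by parts against arbitrary $U\in\A_0$ shows that any $v^{*}\in H^1_0(Q_T)$ satisfying it solves the backward heat equation $v^{*}_t+v^{*}_{xx}=0$ in $Q_T$ together with the overdetermined data $v^{*}=0$ and $v^{*}_x=0$ on the segment $\{x=1\}$; classical parabolic unique continuation then forces $v^{*}\equiv0$, contradicting $v^{*}\neq0$ and yielding $m=0$. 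This is the decisive ingredient, and isolating it cleanly from the soft, Hilbert-space part of the argument is the delicate point of the proof.
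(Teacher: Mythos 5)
Your proposal is correct, but it takes a genuinely different route from the paper at the two places that matter, so a comparison is worthwhile. The paper proves the cycle $(2)\Rightarrow(1)\Rightarrow(3)\Rightarrow(2)$; you instead note that the single identity $\langle E'_T(u),U\rangle=-\int_{Q_T}(U_tv+U_xv_x)\,dx\,dt$ (obtained, exactly as in the paper, by testing the linearized corrector equation (\ref{basico}) with $\phi=v$) makes (2) and (3) \emph{literally} the same statement, since $E_T(u)=0$ iff $\T u=0$ and $v$ ranges over $\im\T$ as $u$ ranges over $\A$; the paper only gets $(3)\Rightarrow(2)$ this way and recovers the converse through the detour via (1), so your shortcut is a clean simplification. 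The implication $(1)\Rightarrow(3)$ by convexity is identical in both. The real divergence is the implication toward (1): the paper argues from hypothesis (2) that $\|\T u\|$ and $\|E'_T(u)\|$ are equivalent norms on $\A_0/\ker\T$ and follows the flow of $-E'_T$; you instead project $0$ onto the closed affine subspace $\overline{\T(\A)}$ and convert orthogonality to $\overline{\T(\A_0)}$ into condition (\ref{unique}) for the projection $v^*$, again via (\ref{basico}) with $\phi=v^*$. Your Hilbert-space geometry here is arguably \emph{more} solid than the paper's step (a uniform bound $\|E'_T(u)\|\ge C\,E_T(u)^{1/2}$ is a \L ojasiewicz-type inequality that would make the gradient flow converge to an actual minimizer, i.e.\ would yield exact controllability, which is more than can hold in this setting), and you correctly identify the genuine crux: $v^*$ lies only in $\overline{\im\T}$, while (2) quantifies over $\im\T$. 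Two remarks on how you close that gap. First, invoking classical lateral-Cauchy-data unique continuation ($v^*_t+v^*_{xx}=0$ with $v^*=v^*_x=0$ on $\{x=1\}$, hence $v^*\equiv0$) is correct but heavy machinery: the elementary trick of Proposition \ref{ucp} applies verbatim to your $v^*$, because that argument never uses membership in $\im\T$ --- since $H^1_0(Q_T)\subset\A_0$ you may take $U=v^*$ in (\ref{unique}), the term $\int_{Q_T} v^*_t v^*\,dx\,dt$ vanishes by the zero temporal traces, forcing $v^*_x\equiv0$ and then $v^*\equiv0$; no Holmgren-type theorem is needed. Second, be aware that your final step proves (1) \emph{unconditionally} rather than deriving it from (2) or (3); this is logically legitimate here (all three assertions do hold for the heat equation, by Proposition \ref{ucp} together with the theorem), but it means your argument would not transfer to the settings the paper has in mind, such as the wave equation with small time horizon $T$, where unique continuation genuinely fails and the gap between $\overline{\im\T}$ and $\im\T$ cannot be closed by appealing to a true PDE uniqueness theorem; the paper's proof of $(2)\Rightarrow(1)$, whatever its technical roughness, uses only hypothesis (2) at that step.
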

\begin{proof}
We will show $(2)\Longrightarrow (1)\Longrightarrow (3)\Longrightarrow(2)$.

Let $u\in\A$, and take $U\in\A_0$. Consider the variation $u+\eta U$, preserving boundary, initial, and final data. Let $v$ be the corrector associated with $u$, and put $v+\eta V$ for the corrector associated with $u+\eta U$. By linearity, it is elementary to argue that 
\begin{equation}\label{basico}
\int_{Q_T}[(U_t+V)\phi+(U_x+V_x)\phi_x+V_t\phi_t]\,dx\,dt=0
\end{equation}
for all $\phi\in H^1_0(Q_T)$, 
where $V$, as $v$ itself, belongs to $H^1_0(Q_T)$. 
On the other hand, it is also elementary to compute the derivative of $E_T(u+\eta U)$ with respect to $\eta$ at $\eta=0$. It is given by
$$
\int_{Q_T}(vV+v_xV_x+v_tV_t)\,dx\,dt.
$$
By using (\ref{basico}) for $\phi=v$, we also can write
$$
\langle E'_T(u), U\rangle=\left.\frac{d E_T(u+\eta U)}{d\eta}\right|_{\eta=0}=-\int_{Q_T}(U_tv+U_xv_x)\,dx\,dt.
$$
Recall that $\T:\A=\overline u+\A_0\mapsto H^1_0(Q_T)$ 
can be regarded as a linear, continuous operator taking $u$ into its corrector $v$. Then, because of the unique continuation property, $\langle E'(u), U\rangle=0$ for all $U\in\A_0$ if and only if $\T u=0$. Therefore, it is a standard fact in Functional Analysis that over the quotient space $\A_0/\hbox{ker}\T$ both quantities
$\|\T u\|$ and 
$$
\sup_{U\in\A_0, \|U\|=1}\langle E'_T(u), U\rangle=\sup_{U\in\A_0, \|U\|=1}\int_{Q_T}(U_tv+U_xv_x)\,dx\,dt
$$
should be equivalent norms. Hence, for some positive constant $C>0$, 
$$
\|E'_T(u)\|\equiv\sup_{U\in\A_0, \|U\|=1}\langle E'_T(u), U\rangle\ge C\|\T u\|=CE_T(u)^{1/2}.
$$
If, starting out at arbitrary $u^0\in\A$, we follow the flow of $-E'_T$, we would eventually reach a certain $u\in\A$ so that $\|E'_T(u^0+u)\|<\epsilon$. This, together with the previous inequality, yields the approximate controllability result. 

Assume now that $\tilde u\in\A$ in a critical point of $E_T$. Under the approximate controlability property, we would like to conclude that $u$ is indeed a solution of the controlability problem. To this end, notice that:
\begin{itemize}
\item the infimum of $E_T$ over $\A$ vanishes: this is the approximate controlability property;
\item $E_T$ is a non-negative, convex functional.
\end{itemize}
As a consequence of the convexity, if $\tilde u$ is a critical point of $E_T$, it has to be a minimizer as well. But then the infimum becomes a minimum, and it has to vanish, i.e. $E_T(\tilde u)=0$.

Finally, let $\tilde v\in \im\T$ be such that (\ref{unique}) holds for all $U\in\A_0$. Let $\tilde u\in\A$ be such that $\T\tilde u=\tilde v$. By the computations performed above, (\ref{unique}) implies $E'_T(\tilde u)=0$, and so, by hypothesis, $\|\tilde v\|^2=E_T(\tilde u)=0$, that is $\tilde v\equiv0$. 
\end{proof}

The equivalence stated in this theorem implies that the above concept of approximate controllability might be a bit more flexible than the classic one, at least for data sets which are traces for $t=0$ and $t=T$ of $H^1(Q_T)$-functions.

\begin{proposition}\label{ucp}
For every positive time $T>0$, the unique continuation property in Definition \ref{defprin} holds. 
\end{proposition}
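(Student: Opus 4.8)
The plan is to recognize the integral identity (\ref{unique}) as the weak form of a \emph{backward} heat equation and then to reverse time, reducing the whole statement to the classical uniqueness for the heat initial--boundary--value problem.

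First I would unwind (\ref{unique}). Let $v\in\im\T\subset H^1_0(Q_T)$ satisfy $\int_{Q_T}(U_tv+U_xv_x)\,dx\,dt=0$ for every $U\in\A_0$. Choosing $U$ with compact support in the open rectangle $Q_T$ and integrating by parts shows that $v_t+v_{xx}=0$ in the sense of distributions on $Q_T$; since $v\in H^1(Q_T)$, the identity $v_{xx}=-v_t\in L^2(Q_T)$ upgrades the regularity to $v\in L^2(0,T;H^2(0,1))$, so the manipulations below are legitimate. Allowing $U$ to carry a nontrivial trace on the end-point $\{x=1\}$ yields, in addition, the homogeneous Neumann condition $v_x(\cdot,1)=0$, though this extra information will not even be needed. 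Note that the argument I am about to give works for \emph{any} $v\in H^1_0(Q_T)$ obeying (\ref{unique}), and so proves a statement slightly stronger than the membership $v\in\im\T$ requires.

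The decisive observation is that $v\in H^1_0(Q_T)$ forces $v$ to vanish, in the sense of traces, on the whole of $\partial Q_T$. In particular $v(t,0)=v(t,1)=0$ on the lateral sides, and $v$ vanishes on both the initial slice $\{t=0\}$ and --- crucially --- the terminal slice $\{t=T\}$. Thus $v$ is a weak solution of $v_t+v_{xx}=0$ carrying zero Dirichlet data and zero terminal datum $v(T,\cdot)=0$. I would then reverse time: setting $w(\tau,x):=v(T-\tau,x)$, a direct computation turns $v_t+v_{xx}=0$ into the forward heat equation $w_\tau-w_{xx}=0$ on $Q_T$, with homogeneous Dirichlet lateral conditions and zero initial datum $w(0,\cdot)=v(T,\cdot)=0$. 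Uniqueness for this problem is immediate from the energy identity $\frac{d}{d\tau}\int_0^1 w(\tau,x)^2\,dx=-2\int_0^1 w_x(\tau,x)^2\,dx\le0$, which together with $w(0,\cdot)=0$ gives $\int_0^1 w(\tau,x)^2\,dx=0$ for all $\tau$. Hence $w\equiv0$, that is $v\equiv0$, which is exactly the unique continuation property claimed.

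The main obstacle --- such as it is --- is not analytic depth but bookkeeping: one must justify the integration by parts and the energy estimate at the $H^1$ level, and be careful that the traces of $v$ on all four sides of the space--time rectangle, especially on the terminal slice $\{t=T\}$, are genuinely controlled by $v\in H^1_0(Q_T)$. Once the integral identity is read as the weak backward heat equation and the $H^1_0$ membership is seen to supply the terminal condition, the statement collapses onto the well-posedness of the ordinary heat equation, and no genuine unique-continuation machinery (Holmgren, Carleman estimates) is required.
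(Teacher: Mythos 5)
Your proof is correct, but it takes a genuinely different and noticeably heavier route than the paper's. The paper's argument is a two-line variational one: since $H^1_0(Q_T)\subset\A_0$, the function $v$ is itself an admissible test direction, and taking $U=v$ in (\ref{unique}) gives $\int_{Q_T}(v_tv+v_x^2)\,dx\,dt=0$; the first term vanishes because $\int_{Q_T}v_tv\,dx\,dt=\frac12\int_0^1\bigl[v^2\bigr]_{t=0}^{t=T}\,dx=0$ (both temporal traces of $v\in H^1_0(Q_T)$ vanish), so $v_x\equiv0$, and the lateral zero traces then force $v\equiv0$. You instead extract the pointwise backward heat equation $v_t+v_{xx}=0$ from compactly supported test functions, bootstrap to $v\in L^2(0,T;H^2(0,1))$, reverse time, and run the standard energy decay from the zero terminal datum. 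Your bookkeeping does check out --- the slicing argument giving $v(t,\cdot)\in H^2(0,1)$ for a.e.\ $t$, the absolute continuity of $\tau\mapsto\|w(\tau,\cdot)\|^2_{L^2}$ via $w\in H^1(0,T;L^2(0,1))$, and the vanishing trace of $v$ on $\{t=T\}$ are all legitimate --- so there is no gap. As for what each approach buys: yours is conceptually illuminating, since it isolates exactly why no Holmgren or Carleman machinery is needed (the $H^1_0$ membership supplies data on the slice where the time-reversed equation is well posed), and it only uses one of the two temporal traces; but it pays with the regularity upgrade and slice-wise trace verifications, none of which the paper needs, since the paper never leaves the weak formulation, exploits no structure beyond the sign of $\int_{Q_T}U_xv_x\,dx\,dt$, and therefore transfers verbatim to higher dimensions and other operators. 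Indeed, integrating your energy identity in $\tau$ over $[0,T]$ and using that \emph{both} endpoint traces vanish collapses precisely to the paper's single identity with $U=v$, so your proof is in effect a disintegrated, time-slice version of the paper's. Two side remarks: your ``slightly stronger'' claim (validity for every $v\in H^1_0(Q_T)$ satisfying (\ref{unique}), not just $v\in\im\T$) holds equally for the paper's proof, which likewise never uses membership in $\im\T$; and your asserted Neumann condition $v_x(\cdot,1)=0$ would require making the trace sense precise via the $H^2$ regularity, but since you never use it, this costs nothing.
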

\begin{proof}
Simply notice that $H^1_0(Q_T)\subset\A_0$, and so we can take $U=v$ in (\ref{unique}) to obtain
$$
\int_{Q_T}(v_tv+v_x^2)\,dx\,dt=0.
$$
Because $v(0, x)=v(T, x)=0$ for all $x\in(0, 1)$, we conclude that $v_x\equiv0$ in $Q_T$. This together with the vanishing boundary conditions $v(t, 0)=v(t, 1)=0$ implies $v\equiv0$. 
\end{proof}

\section{Extension}
The setting described in the previous section admits some straightforward variations.
The choice of the space for the correctors $v\in H^1_0(Q_T)$ can be changed. For example, one can take $v\in H^1(Q_T)$ for a broader situation, and in this case the corrector will enjoy the natural boundary condition all around $Q_T$: $u_x=0$ for $x=0$ and $x=1$, while $u_t=0$ for $t=0$ and $t=T$. But intermediate alternatives are also possible: $v=0$ for $t=0$, and $t=T$, and so $u_x=0$ for $x=0$ and $x=1$, or $v\in\A_0$, as well. Another possibility is to define the corrector $v$ for a.e. time slice as a minimization problem only in space. This can again be easily set up in more or less the same terms (see \cite{pedregal}). 

Rather than considering these various possibilities which are straightforward variations, 
we would like to explore the most general framework that this approach may allow for $u$ instead of for $v$. Our ambient space will now be 
\begin{align}
\A_0=\{ &U\in L^2(0, T; \tilde H^1_0(0, 1)): U_t\in L^2(0, T; \tilde H^{-1}(0, 1)), \nonumber\\
&U(0, x)=U(T, x)=0 \hbox{ for }x\in(0, 1), t\in(0, T)\}.\nonumber
\end{align}
We are taking here
$$
\tilde H^1_0(0, 1))=\{U\in H^1(0, 1): U(0)=0\},
$$
while $\tilde H^{-1}(0, 1)$ is its dual. 
Notice that every $U\in\A_0$ belongs to the space $\C([0, T]; L^2(0, 1))$ so that traces of $U$ are defined for every time $t\in[0, T]$ (\cite{EvansB}). 
If $\overline u\in L^2(0, T; \tilde H^1_0(0, 1))$  (and so $\overline u(t, 0)=0$ for a.e. $t\in(0, T)$), with $\overline u_t\in L^2(0, T; \tilde H^{-1}(0, 1))$, furnishes  initial and final data, we will put as before $\A=\overline u+\A_0$. This time initial and final data, $\overline u(0, x)$, $\overline u(T, x)$ merely belong to $L^2(0, 1)$. 

For $u\in\A$, define its ``corrector" $v\in H^1_0(Q_T)$ 
to be the unique solution of the variational problem
$$
\int_{Q_T}[(u_x+v_x)\phi_x+(v_t-u)\phi_t+v\phi]\,dx\,dt=0
$$
for all $\phi\in H^1_0(Q_T)$. 
Note that $v$ is the unique solution of the minimization problem
$$
\hbox{Minimize in }w\in H^1_0(Q_T):\quad \int_{Q_T}\left(\frac12[(u_x+w_x)^2+(w_t-u)^2+w^2]\right)\,dx\,dt.
$$
The error functional $E:\A\to\R^+$ is taken to be the size of the corrector
$$
E(u)=\int_{Q_T}\frac12 (v_x^2+v_t^2+v^2)\,dx\,dt.
$$
As above, we investigate the derivative of the error functional. To this end, put $u+\eta U$ for $U\in\A_0$, and $v+\eta V$, its corresponding corrector, with $v$ the corrector for $u$. Then
$$
\int_{Q_T}[(U_x+V_x)\phi_x+(V_t-U)\phi_t+V\phi]\,dx\,dt=0
$$
for all $\phi\in H^1_0(Q_T)$. In the same way,
$$
\langle E'(u), U\rangle=\int_{Q_T}(v_xV_x+v_tV_t+vV)\,dx\,dt=\int_{Q_T}(Uv_t-U_xv_x)\,dx\,dt,
$$
by taking $\phi=v$ in the last identity. 

The unique continuation property, and the equivalence with approximate controllability are established in the same way as before. 

\section{Exact controllability}
Within this framework, exact controllability can be deduced as a consequence of the fact that the range of the map $\T$ is closed, in addition to the unique continuation property. More precisely, recall the definition of the map $\T:\overline u+\A_0\mapsto H^1_0(Q_T)$, taking every feasible $u\in\overline u+\A_0$ into its corrector $v$, in the analytical framework of Section \ref{prime}. The error functional corresponds to the least-square problem
$$
\hbox{Minimize in }u\in\overline u+\A_0:\quad \frac12\|\T u\|^2.
$$
Exact controllability can then be achieved as a consequence of two facts:
\begin{enumerate}
\item the infimum $m$ is in fact a minimum;
\item $m$ does vanish. 
\end{enumerate}
The unique continuation property is related to the second issue, but the first is equivalent to the fact that the range of $\T$ is closed. Except for general remarks involving the adjoint operator $\T^*$, the exact controllability issue involves subtle and delicate ideas about Carleman inequalities and observability. This elegant theory is very well established (see some of the references indicated in the Introduction). 

\section{The inner controllability case}
Let $\omega\subset(0, 1)$ be an interval. Put $Q_T=(0, T)\times(0, 1)$, $q_T=(0, T)\times\omega$. Let $\A_0$ be taken now as the space
$$
\A_0=\{U\in L^2(0, T; H^2(0, 1)\cap H^1_0(0, 1)): U_t\in L^2(Q_T), U=0\hbox{ on }\partial Q_T\}, 
$$
and $\overline u\in L^2(0, T; H^2(0, 1)\cap H^1_0(0, 1))$ with $\overline u_t\in L^2(Q_T)$, carrying the boundary (around $\partial\Omega$), initial, and final data. For feasible functions $u\in\A\equiv\overline u+\A_0$, let $v$ be its associated corrector,  the unique solution of the problem
\begin{equation}\label{lap}
v_{tt}+v_{xx}=\chi_{Q_T\setminus q_T}(t, x)\left(u_t-u_{xx}\right)\hbox{ in }Q_T,\quad v=0\hbox{ on }\partial Q_T,
\end{equation}
and the error functional $E_T:\A\mapsto\R^+$
$$
E_T(u)=\frac12\int_{Q_T}(v_t^2+v_x^2)\,dx\,dt.
$$
We also put $\T:\A\mapsto H^1_0(Q_T)$ for the linear, continuous mapping taking $u$ into its corrector $v$. 

Assume that $u\in\A$ is such that $E(u)=0$. Then $v\equiv0$, and 
$$
\chi_{Q_T\setminus q_T}\left(u_t-u_{xx}\right)\equiv0\hbox{ in }Q_T.
$$
Hence if we take $f=(u_t-u_{xx})\in L^2(Q_T)$, then
$$
u_t-u_{xx}=\chi_{q_T}f\hbox{ in }Q_T,
$$
and the restriction $f$ becomes the sought control. 
\begin{definition}\label{defprinseg}
\begin{enumerate}
\item We say that the datum $\overline u(T, x)$ is approximately controllable from $\overline u(0, x)$ through the subset $\omega$ of $\Omega$, if for every $\epsilon>0$, there is $u_\epsilon\in\A$ such that $E(u_\epsilon)<\epsilon$. 
\item We say that the datum $\overline u(T, x)$ is exactly controllable from $\overline u(0, x)$ through the subset $\omega$ of $\Omega$, if there is $u\in\A$ such that $E(u)=0$. 
\item The unique continuation holds if the only $v\in\im\T\subset H^1_0(Q_T)$  
with $v\equiv0$ in $q_T$ and 
\begin{equation}\label{uniquein}
\int_{Q_T\setminus q_T}(-Uv_t+U_xv_x)\,dx\,dt=0
\end{equation}
for all $U\in\A_0$ is the trivial one $v\equiv0$. 
\item We say that $E_T$ is an error functional if $E'_T(u)=0$ implies $E_T(u)=0$.
\end{enumerate}
\end{definition}

Just as in the boundary situation, the integral occurring in the unique continuation property is precisely the integral that appears when computing the Gateaux derivative
$$
\left.\frac{d E_T(u+\epsilon U)}{d\epsilon}\right|_{\epsilon=0}=\langle E'_T(u), U\rangle.
$$
Indeed, because of linearity, 
\begin{equation}\label{linealint}
V_{tt}+V_{xx}=\chi_{Q_T\setminus q_T}(U_t-U_{xx})\hbox{ in }Q_T, \quad V=0\hbox{ on }\partial Q_T,
\end{equation}
if $V$ is the variation produced in $v$ by $U\in\A_0$ in $u$. 
Then
$$
\langle E'_T(u), U\rangle=\int_{Q_T}(v_tV_t+v_xV_x)\,dx\,dt.
$$
By using (\ref{linealint}), we obtain
$$
\langle E'_T(u), U\rangle=-\int_{Q_T\setminus q_T}(U_t-U_{xx})v\,dx\,dt.
$$
Let us focus on the second term
$$
\int_{Q_T\setminus q_T}U_{xx}v\,dx\,dt.
$$
A first integration by parts yields
$$
-\int_{Q_T\setminus q_T}U_xv_x\,dx\,dt+\int_{\partial(Q_T\setminus q_T)}U_xv\,dS.
$$
But since $v=0$ around $\partial Q_T$, we find that the boundary integral equals
$$
\int_{\partial q_T} U_xv\,dS=-\int_{q_T}U_{xx}v\,dx\,dt.
$$
We can always take $U$ to be arbitrary in $q_T$, and independent of time, so that $v\equiv0$ in $q_T$. Hence, altogether,
\begin{equation}\label{derivada}
\langle E'_T(u), U\rangle=-\int_{Q_T\setminus q_T}(U_tv+U_xv_x)\,dx\,dt.
\end{equation}
This is the basic computation for an equivalence as in the  boundary situation. The proof follows exactly along the same lines as with the boundary counterpart. 
\begin{theorem}
Let $T>0$, and $\omega\subset\Omega$ be given. Let also $\overline u\in\A$ furnish initial and final data. The following are equivalent:
\begin{enumerate}
\item the trace $\overline u(T, x)$ for $t=T$ of $\overline u$ is approximately controllable from $\overline u(0, x)$ through $\omega$;
\item the corresponding unique continuation holds;
\item $E_T$ is an error functional in the sense of Definition \ref{defprinseg}.
\end{enumerate}
\end{theorem}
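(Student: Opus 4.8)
The plan is to recycle the architecture of the boundary theorem, proving the cycle $(2)\Rightarrow(1)\Rightarrow(3)\Rightarrow(2)$, since the only genuinely new ingredient — the cutoff $\chi_{Q_T\setminus q_T}$ and the requirement that the corrector vanish on $q_T$ — has already been absorbed into the derivative identity (\ref{derivada}). First I would record the two structural facts that drive the machine: that $\T$ is linear (the corrector solves the linear problem (\ref{lap}), with linearized version (\ref{linealint})), so that $E_T(u)=\frac12\|\T u\|^2$ is a non-negative convex quadratic on the affine space $\A$; and that, after integrating by parts in $t$ using $U(0,x)=U(T,x)=0$, the form (\ref{derivada}) coincides with the pairing in (\ref{uniquein}). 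Hence $\langle E'_T(u),U\rangle=0$ for all $U\in\A_0$ is equivalent to $v=\T u$ satisfying (\ref{uniquein}) together with, upon testing against $U$ that is arbitrary and time-independent on $q_T$, the relation $v\equiv0$ on $q_T$ — that is, to $v$ being an admissible candidate in the unique continuation property.

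For $(2)\Rightarrow(1)$ I would argue as in the boundary case: under unique continuation the condition $\langle E'_T(u),U\rangle=0$ for all $U\in\A_0$ forces $\T u=0$, so on the quotient $\A_0/\ker\T$ the quantities $\|\T u\|$ and $\sup_{\|U\|=1}\langle E'_T(u),U\rangle=\|E'_T(u)\|$ are both norms vanishing simultaneously; invoking their equivalence yields a constant $C>0$ with $\|E'_T(u)\|\ge C\,E_T(u)^{1/2}$, and following the flow of $-E'_T$ from an arbitrary $u^0\in\A$ drives $\|E'_T\|$, hence $E_T$, below any prescribed $\epsilon$, which is approximate controllability. Then $(1)\Rightarrow(3)$ is immediate from convexity: $E_T\ge0$ is convex with vanishing infimum, so any critical point is a global minimizer and therefore attains the value $0$, i.e. $E'_T(u)=0$ forces $E_T(u)=0$.

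Finally, $(3)\Rightarrow(2)$ reads off the derivative identity directly. Given $\tilde v\in\im\T$ with $\tilde v\equiv0$ on $q_T$ and satisfying (\ref{uniquein}), choose $\tilde u\in\A$ with $\T\tilde u=\tilde v$; by the equivalence recorded above these two conditions say precisely that $E'_T(\tilde u)=0$, whence hypothesis $(3)$ gives $E_T(\tilde u)=\frac12\|\tilde v\|^2=0$, so $\tilde v\equiv0$ and unique continuation holds. The main obstacle is not the algebra of these implications — routine once (\ref{derivada}) is in hand — but the functional-analytic step hidden in $(2)\Rightarrow(1)$, namely the assertion that $\|\T u\|$ and $\|E'_T(u)\|$ are equivalent norms on $\A_0/\ker\T$. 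This is exactly where the smoothing of the operator in (\ref{lap}) intervenes, and I would be careful to extract from it only that the infimum of $E_T$ vanishes (approximate controllability), not that it is attained; attainment is the separate, harder closed-range question flagged in the exact controllability section.
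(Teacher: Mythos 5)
Your proposal is correct and takes essentially the same approach as the paper: after establishing the identity (\ref{derivada}), the paper itself says the proof ``follows exactly along the same lines as with the boundary counterpart,'' namely the cycle $(2)\Rightarrow(1)\Rightarrow(3)\Rightarrow(2)$ with the quotient-norm equivalence on $\A_0/\ker\T$ giving $\|E'_T(u)\|\ge C\,E_T(u)^{1/2}$ and the descent flow for $(2)\Rightarrow(1)$, convexity of the non-negative quadratic $E_T=\frac12\|\T\cdot\|^2$ for $(1)\Rightarrow(3)$, and reading the derivative identity backwards for $(3)\Rightarrow(2)$ --- precisely the steps you reproduce. Your supplementary observations (the integration by parts in $t$ reconciling (\ref{derivada}) with the pairing in (\ref{uniquein}), the role of time-independent test functions arbitrary on $q_T$ in forcing $v\equiv0$ there, and the caution that this argument yields only $m=0$ and not attainment) all agree with the paper's own remarks.
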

In this setting, it is also immediate to check that the unique continuation condition holds, so that we have approximate controllability as well. Just notice that  the corrector $v$, being the solution in (\ref{lap}), is a feasible direction $U$ because $H^2(Q_T)\cap H^1_0(Q_T)\subset\A_0$. By taking $U=v$ in (\ref{derivada}), we conclude immediately that $v\equiv0$. 

\section{Final comments}
The formalism introduced here, and described in detail for the   linear, homogeneous heat equation in (spatial) dimension $N=1$ can be formally extended, in a rather direct way, to many other frameworks because of its flexibility. The specific treatment of the unique continuation property may however change from situation to situation. For instance, it is well-known that for the wave equation the unique continuation property requires a certain size of the horizon $T$ due to the finite speed of propagation. Some of those situations include, but are not limited to: 
\begin{itemize}
\item higher dimension $N>1$;
\item inhomogeneous heat equation;
\item wave equation;
\item systems of differential equations;
\item situations for degenerate equations;
\item non-linear problems.
\end{itemize}

Especially in linear cases, this viewpoint naturally leads to an iterative approximation scheme based on a standard descent method. It has already been tested in various scenarios and, at least numerically, it performs very well (see \cite{arandapedregal}, \cite{AM}, \cite{AM2}, \cite{AM-PP}, \cite{AM-PP2}).

\end{document}